\theoremstyle{plain}
\newtheorem{thm}{Theorem}
\newtheorem{cor}{Corollary}
\theoremstyle{definition}
\newtheorem{rem}{Remark}
\newtheorem{ack}{Acknowledgements}
\def\cprime{$'$}
\newcommand{\C}{{\mathbb C}} 
\newcommand{\abs}[1]{{\left| {#1} \right|}}
\renewcommand{\Re}{\operatorname{Re}}
\begin{document}
\author{Johan Andersson\thanks{Uppsala University, \,  Department of Mathematics, \,  P.O. Box 480, \,  SE-751 06 Uppsala \, \, {Email: {\texttt  {johana@math.uu.se}}}}}

\title{{L}avrent\cprime ev's approximation theorem with nonvanishing polynomials and universality of zeta-functions}

\maketitle

\begin{abstract}
 We  prove a variant of the  {L}avrent\cprime ev's approximation theorem that allows us to approximate a continuous function on a compact set $K$ in $\C$ without interior points and with connected complement, with polynomial functions that are nonvanishing on $K$. 

We use this result to obtain a version of the  Voronin universality theorem for compact sets $K$, without interior points and with connected complement where it is sufficient that the  function is continuous on $K$ and the condition that it is  nonvanishing can be removed. 

This implies a special case of a criterion of Bagchi, which in the general case has been proven to be equivalent to the Riemann hypothesis.

\end{abstract}

\begin{thm}
 Let $K$ be a compact set in the complex plane  with connected complement and  without interior points. Let $f(z)$ be any continuous function on $K$. Then for any $\epsilon>0$ there exists a polynomial $p(z)$ that is nonvanishing on $K$ such that
  $$|p(z)-f(z)|<\epsilon,  \qquad \qquad z \in K.$$
\end{thm}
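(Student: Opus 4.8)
The plan is to take a polynomial that approximates $f$ well on $K$ by the classical Lavrent\cprime ev theorem, and then perturb it slightly so that none of its zeros remain on $K$, exploiting the fact that $K$ is nowhere dense.

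First I would invoke the classical Lavrent\cprime ev approximation theorem --- which applies precisely because $K$ is compact with connected complement and empty interior --- to obtain a polynomial $q$ with $|q(z)-f(z)|<\epsilon/3$ for all $z\in K$. If $q$ is the zero polynomial, this already forces $|f(z)|<\epsilon/3$ on $K$, and the constant polynomial $p\equiv\epsilon/3$ does the job; so from now on I may assume $q\not\equiv 0$ and factor it as $q(z)=c\prod_{j=1}^{n}(z-z_j)$ with $c\neq 0$.

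Next I would use that a compact (hence closed) set with empty interior is nowhere dense, so $\C\setminus K$ is open and dense. Therefore, for each root $z_j$ that happens to lie in $K$ I can pick $w_j\in\C\setminus K$ with $|w_j-z_j|<\eta$ for a parameter $\eta>0$ still to be chosen; for every root already outside $K$ I simply keep $w_j=z_j$. Setting $p(z)=c\prod_{j=1}^{n}(z-w_j)$, every factor $z-w_j$ is then nonzero on $K$, so $p$ is nonvanishing there. A routine telescoping estimate --- bounding each of the finitely many factors $|z-w_j|$ and $|z-z_j|$ on $K$ by a fixed constant (for $\eta\le 1$, say) --- shows that $\sup_{z\in K}|p(z)-q(z)|\le C\eta$ for a constant $C$ depending only on $q$ and $K$, so taking $\eta$ small enough makes this supremum $<\epsilon/3$. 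Then $|p(z)-f(z)|\le|p(z)-q(z)|+|q(z)-f(z)|<2\epsilon/3<\epsilon$ on $K$, which is the desired conclusion.

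I do not expect a genuine obstacle here: the connectedness hypothesis is entirely absorbed into the invocation of classical Lavrent\cprime ev, while the empty-interior hypothesis is exactly what makes $\C\setminus K$ dense and hence lets the finitely many offending roots be nudged off $K$ by an arbitrarily small displacement without disturbing the uniform approximation. The only points calling for a little care are the degenerate case $q\equiv 0$ and keeping track of the $\epsilon/3$'s so that the final inequality comes out strict.
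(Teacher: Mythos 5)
Your proposal is correct and follows essentially the same route as the paper: apply the classical {L}avrent\cprime ev theorem to get an approximating polynomial, then use the emptiness of the interior of $K$ to nudge each root into $\C\setminus K$, and control the resulting perturbation uniformly on $K$ (you via a telescoping product estimate, the paper via convergence of coefficients). Your explicit treatment of the degenerate case $q\equiv 0$ is a small point of extra care that the paper glosses over, but the argument is the same in substance.
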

This theorem is  a generalization  of a theorem of  {L}avrent\cprime ev \cite{Lavrentiev} or \cite{Mergelyan2}, which in turn  is a special case of a theorem of Mergelyan where the set $K$ is allowed to have interior points, (see  \cite{Mergelyan}, \cite{Carleson} or \cite{Rudin}, Theorem 20.5).  {L}avrent\cprime ev's original theorem does not say that we can choose the polynomial to be nonvanishing on $K$.

\begin{proof}
 By  {L}avrent\cprime ev's theorem \cite{Lavrentiev} (Theorem 1 without assuming that $p(z)$ is nonvanishing) we can find a polynomial $P(z)$ such that
 \begin{gather} \label{u1} \abs{P(z)-f(z)}<\epsilon/2,  \qquad \qquad z \in K. \\ \intertext{Let} 
 \notag P(z)=c_0 \prod_{k=1}^m (z-z_k), \\ \intertext{where $z_k$ denote the roots of $P(z)$. Since $K$ contains no interior points there exist  sequences $z_{k,n}$ of points in $\C \setminus K$ such that $\lim_{n \to \infty} z_{k,n}=z_k$. Define}
 \notag  p_n(z)=c_0 \prod_{k=1}^m (z-z_{k,n}) \\ \intertext{
Since  all the coefficients of the polynomials $p_n(z)$ will converge to the coefficients of $P(z)$, it is clear that $p_n(z)$ will converge to $P(z)$ uniformly on the compact set $K$. Hence there exists an $n$ such that}
\label{u2} |p_n(z)-P(z)|<\epsilon/2.
\end{gather}
 Since  $z_{k,n}$ denote points in $\C \setminus K$, the polynomial $p_n(z)$ will have all its zeroes outside of $K$ and the polynomial will be nonvanishing on $K$. We can therefore choose $p(z)=p_n(z)$. By the triangle inequality and \eqref{u1}, \eqref{u2} we have  the inequality $|p(z)-f(z)|<\epsilon$ for every $z \in K$.
\end{proof}

\begin{thm}
 Suppose that $K \subset \{z \in \C: 1/2<\Re(z)<1 \}$ is a compact set without interior points and with connected complement, and that $f$ is a continuous function on $K$. Then for any $\epsilon>0$  we have that
$$\liminf_{T \to \infty} \frac 1 T \mathop{\rm meas} \left \{t \in [0,T]:\max_{z \in K} \abs{\zeta(z+it)-f(z)}<\epsilon \right \}>0. $$
\end{thm}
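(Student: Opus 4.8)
The plan is to reduce the statement to the classical Voronin universality theorem (in the form valid for arbitrary compact sets with connected complement, due to Voronin, with the general compact-set version established by Bagchi and Reich) by using Theorem 1 to dispose of the nonvanishing hypothesis. Fix $\epsilon>0$ and a continuous function $f$ on $K$. Since $K$ is compact, has connected complement, and has no interior points, Theorem 1 provides a polynomial $p(z)$, nonvanishing on $K$, with $\abs{p(z)-f(z)}<\epsilon/2$ for all $z\in K$.

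Next I would invoke the classical universality theorem: if $g$ is continuous on $K$, holomorphic in the interior of $K$, and nonvanishing on $K$, then
$$\liminf_{T\to\infty}\frac1T\mathop{\rm meas}\left\{t\in[0,T]:\max_{z\in K}\abs{\zeta(z+it)-g(z)}<\delta\right\}>0$$
for every $\delta>0$. The polynomial $p$ meets these hypotheses: it is entire, hence continuous on $K$ and holomorphic on a neighbourhood of $K$ (and in any case the interior of $K$ is empty, so the holomorphy condition is vacuous); by construction it is nonvanishing on $K$; and $K$ is compact with connected complement inside the open strip $1/2<\Re(z)<1$. Applying the theorem with $g=p$ and $\delta=\epsilon/2$ produces a set of $t\in[0,T]$ of positive lower density on which $\max_{z\in K}\abs{\zeta(z+it)-p(z)}<\epsilon/2$.

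Finally, for every such $t$ the triangle inequality combined with $\abs{p(z)-f(z)}<\epsilon/2$ gives $\max_{z\in K}\abs{\zeta(z+it)-f(z)}<\epsilon$. Hence the set occurring in the statement of the theorem contains the set just constructed, so it too has positive lower density, which is the desired conclusion.

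I do not anticipate a real obstacle in this argument: the whole difficulty has been pushed into Theorem 1, whose purpose is exactly to remove the nonvanishing requirement built into the classical universality theorem. The only points that require a moment's care are to cite the universality theorem in the version that applies to an arbitrary compact $K$ with connected complement rather than just to closed disks, and to remark that the ``holomorphic in the interior'' condition imposes nothing here because $K$ has empty interior.
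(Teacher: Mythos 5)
Your proposal is correct and follows exactly the paper's own argument: approximate $f$ by a nonvanishing polynomial via Theorem 1 with error $\epsilon/2$, apply the classical universality theorem (in its general compact-set form) to that polynomial with tolerance $\epsilon/2$, and conclude by the triangle inequality. No differences worth noting.
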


 \begin{proof}
  By Theorem 1 we can approximate $f(z)$ by a polynomial $g(z)$ such that \begin{gather} \label{ui} \abs{g(z)-f(z)}< \epsilon/2,  \qquad \qquad z \in K, \end{gather}
where $g(z)$ is nonvanishing on $K$. By the classical theorem of universality (which in the case of compact sets without interior points is just Theorem 2 where  $f(z)$ is assumed to be nonvanishing on  $K$, see \cite{Bagchi}, \cite{Steuding}, Theorem 1.9  or  \cite{Laurincikas}), we  have that  
$$\liminf_{T \to \infty} \frac 1 T \mathop{\rm meas} \left \{t \in [0,T]:\max_{z \in K} \abs{\zeta(z+it)-g(z)}<\epsilon/2 \right \}=\delta>0. $$
When we have the inequality
$$\max_{z \in K} \abs{\zeta(z+it)-g(z)}<\epsilon/2$$
it follows from  the triangle inequality and \eqref{ui} that 
$$\max_{z \in K} \abs{\zeta(z+it)-f(z)}<\epsilon.$$
Hence 
$$\liminf_{T \to \infty} \frac 1 T \mathop{\rm meas} \left \{t \in [0,T]:\max_{z \in K} \abs{\zeta(z+it)-f(z)}<\epsilon \right \}\geq\delta>0. $$
 \end{proof}
A simple corollary is about universality on lines:
\begin{cor}
 Let $f$ be a continuous function on the interval $[0,C]$ and let $1/2<\sigma<1$. Then for any $\epsilon>0$ there exists a $T$ such that
$$ \max_{t \in [0,C]}\abs{f(t)-\zeta(\sigma+iT+it)}<\epsilon.$$
\end{cor}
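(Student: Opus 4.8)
The plan is to deduce this directly from Theorem 2 by choosing $K$ to be a vertical line segment. First I would set
$$K = \{\sigma + it : t \in [0,C]\}.$$
Since $1/2<\sigma<1$, we have $K \subset \{z \in \C : 1/2 < \Re(z) < 1\}$. The segment $K$ is compact, it has empty interior in $\C$, and its complement $\C \setminus K$ is connected, so all the topological hypotheses of Theorem 2 are met.

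Next I would transport $f$ to $K$ by defining $g\colon K \to \C$ via $g(\sigma + it) = f(t)$ for $t \in [0,C]$. Since $t \mapsto \sigma + it$ is a homeomorphism of $[0,C]$ onto $K$ and $f$ is continuous, $g$ is continuous on $K$. Applying Theorem 2 to $K$, $g$ and the given $\epsilon$ gives
$$\liminf_{T \to \infty} \frac 1 T \mathop{\rm meas}\left\{t \in [0,T] : \max_{z \in K} \abs{\zeta(z+it)-g(z)} < \epsilon\right\} > 0,$$
so in particular the set on the left-hand side is nonempty for large parameters; hence there exists a real number $T$ with $\max_{z \in K} \abs{\zeta(z+iT)-g(z)} < \epsilon$.

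Finally I would unwind the definitions: writing $z = \sigma + it$ with $t$ ranging over $[0,C]$, the inequality $\max_{z \in K} \abs{\zeta(z+iT)-g(z)} < \epsilon$ is literally $\max_{t \in [0,C]} \abs{\zeta(\sigma+iT+it)-f(t)} < \epsilon$, which is the asserted estimate. I do not anticipate any genuine obstacle here; the only point requiring a word of justification is that the straight segment $K$ satisfies the hypotheses of Theorem 2, which is immediate, and in fact one could replace $K$ by any simple arc in the strip parametrized by $[0,C]$. Note also that we obtain more than stated: the set of admissible $T$ has positive lower density.
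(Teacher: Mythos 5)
Your proof is correct and takes essentially the same route as the paper, whose entire argument is to choose $K=[\sigma,\sigma+iC]$ in Theorem 2; you have simply spelled out the verification of the hypotheses and the passage from positive lower density to the existence of a single $T$.
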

 \begin{proof} 
  Choose $K=[\sigma,\sigma+iC]$ in Theorem 2. \end{proof}
Another consequence is that the zeta-function fullfills certain self-similarity properties. More precisely:
\begin{cor} Suppose that $K \subset \{z \in \C: 1/2<\Re(z)<1 \}$ is a compact set without interior points and with connected complement. Then for any $\epsilon>0$ we have that
$$\liminf_{T \to \infty} \frac 1 T \mathop{\rm meas} \left \{t \in [0,T]:\max_{z \in K} \abs{\zeta(z+it)-\zeta(z)}<\epsilon \right \}>0. $$
 \end{cor}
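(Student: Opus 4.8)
The plan is to apply Theorem 2 directly, taking $f(z)=\zeta(z)$. All the hypotheses on $K$ demanded by Theorem 2 — compact, no interior points, connected complement, and containment in the strip $\{z\in\C:1/2<\Re(z)<1\}$ — are already assumed in the statement of the corollary, so they transfer verbatim. The only thing that needs checking is that $f$ is an admissible target function, i.e.\ that $\zeta$ restricted to $K$ is continuous.

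For this I would observe that $K$ is a compact subset of the open strip $\{z\in\C:1/2<\Re(z)<1\}$, on which $\zeta$ is holomorphic: the only pole of $\zeta$, at $z=1$, lies outside this strip. Hence $f(z):=\zeta(z)$ is well-defined and analytic, in particular continuous, on $K$. Feeding this $f$ and the given $\epsilon$ into Theorem 2 yields
$$\liminf_{T \to \infty} \frac 1 T \mathop{\rm meas} \left \{t \in [0,T]:\max_{z \in K} \abs{\zeta(z+it)-\zeta(z)}<\epsilon \right \}>0,$$
which is exactly the claimed inequality.

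There is essentially no obstacle here; the content of the corollary is entirely packaged inside Theorem 2. The one conceptual point worth emphasizing is \emph{why} $\zeta$ itself is a legitimate choice of $f$: the classical Voronin universality theorem would require the target function to be non-vanishing on $K$, and $\zeta$ may well vanish in the strip, so that form of universality could not be applied to $f=\zeta$ directly. It is precisely the removal of the non-vanishing hypothesis in Theorem 2 — which in turn rests on the non-vanishing Lavrent\cprime ev approximation of Theorem 1 — that makes this self-similarity statement an immediate consequence.
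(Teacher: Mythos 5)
Your proof is correct and is exactly the argument the paper intends: the corollary is stated as an immediate consequence of Theorem 2 with $f=\zeta$, which is continuous on $K$ since the pole at $z=1$ lies outside the strip. Your remark that the classical universality theorem could not be applied here because $\zeta$ may vanish on $K$ correctly identifies why Theorem 2 (and hence Theorem 1) is needed.
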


Bagchi \cite{Bagchi}, p. 322 called the property that Corollary 2 is true for all compact sets in the strip $1/2<\Re(s)<1$, {\em strongly recurrent} and proved that it was equivalent to the Riemann hypothesis. We note that we have proved this property whenever the compact set has a connected complement and empty interior. While this of course is not sufficient to prove the Riemann hypothesis, since Bagchi's proof (\cite{Bagchi}, p. 322) uses Rouche's theorem and depends on the existence of interior points, there exist in fact quite a number of compact sets of this type, such as compact sets with positive measure (an example is the Cartesian product of a fat (or Smith-Volterra) Cantor set (\cite{br}, Section 4.1) and an interval), so the result in itself is nontrivial.

\begin{rem}
 Although we have stated the theorem about universality only for the Riemann zeta-function in this paper, it is also true for other zeta-functions as well, such as the Dirichlet L-functions. Theorem 6.12 in Steuding \cite{Steuding}  on the Selberg class for example,  can be stated without assuming that the function is nonvanishing if the compact set in that theorem has no interior elements. Also  in the theorems about joint universality of Dirichlet L-functions (see  \cite{Bagchi}, \cite{Steuding}) the conditions that the functions are nonvanishing can be removed if only compact sets without interior points are considered. 
\end{rem}

\begin{ack}
 The author is grateful to Anders Olofsson for stimulating discussions about this paper.
\end{ack}

\bibliographystyle{plain}

\end{document}